\documentclass[a4paper,12pt]{article}
\usepackage[a4paper]{geometry}
\usepackage[english]{babel} 
\usepackage[T1]{fontenc} 
\usepackage[colorlinks=true,linkcolor=blue,citecolor=blue]{hyperref}%
\usepackage{amsfonts} 
\usepackage{amsthm}
\usepackage{thmtools}
\usepackage{mathtools}
\usepackage{amssymb}
\usepackage{systeme}
\usepackage{enumitem}
\usepackage{bm}
\usepackage[overload]{empheq}
\usepackage{amsmath,graphicx} 
\graphicspath{{./Images/}} 
\usepackage{blkarray}
\usepackage{longtable} 
\usepackage[bf]{caption} 
\usepackage{fancyhdr} 
\usepackage{hyperref}
\usepackage[utf8]{inputenc}
\usepackage{listings}
\usepackage{cleveref}
\usepackage{etoolbox}
\usepackage{etoolbox}
\usepackage{relsize}
\usepackage{mathtools}
\usepackage{blkarray}
\usepackage{booktabs}
\let\bbordermatrix\bordermatrix
\patchcmd{\bbordermatrix}{8.75}{4.75}{}{}
\patchcmd{\bbordermatrix}{\left(}{\left[}{}{}
\patchcmd{\bbordermatrix}{\right)}{\right]}{}{}
\theoremstyle{definition}
\newtheorem{theorem}{Theorem}[section]
\newtheorem{proposition}{Proposition}[section]

\newtheorem{lemma}{Lemma}[section]
\newtheorem{definition}{Definition}[section]

\newtheorem{corollary}{Corollary}[section]

\newcommand\norm[1]{\left\lVert#1\right\rVert}
\newcommand{\unnumberedchapter}[1]{ 
	\cleardoublepage 
	\fancyhead[RE]{{\bfseries \leftmark}} 
	\fancyhead[LO]{}}
\usepackage{emptypage} 
\usepackage{eso-pic} 

\def\sym{{\mathrm{Sym}}}
\def\C{{\mathbb{C}}}
\def\P{{\mathbb{P}}}
\usepackage{hyperref} 
\makeatletter
\def\MT@is@composite#1#2\relax{%
	\ifx\\#2\\\else
	\expandafter\def\expandafter\MT@char\expandafter{\csname\expandafter
		\string\csname\MT@encoding\endcsname
		\MT@detokenize@n{#1}-\MT@detokenize@n{#2}\endcsname}%
	\ifx\UnicodeEncodingName\@undefined\else
	\expandafter\expandafter\expandafter\MT@is@uni@comp\MT@char\iffontchar\else\fi\relax
	\fi
	\expandafter\expandafter\expandafter\MT@is@letter\MT@char\relax\relax
	\ifnum\MT@char@ < \z@
	\ifMT@xunicode
	\edef\MT@char{\MT@exp@two@c\MT@strip@prefix\meaning\MT@char>\relax}%
	\expandafter\MT@exp@two@c\expandafter\MT@is@charx\expandafter
	\MT@char\MT@charxstring\relax\relax\relax\relax\relax
	\fi
	\fi
	\fi
}
\def\MT@is@uni@comp#1\iffontchar#2\else#3\fi\relax{%
	\ifx\\#2\\\else\edef\MT@char{\iffontchar#2\fi}\fi
}
\makeatother
\usepackage{graphicx}
\usepackage{calc}
\newlength{\depthofsumsign}
\setlength{\depthofsumsign}{\depthof{$\sum$}}
\newlength{\totalheightofsumsign}
\newlength{\heightanddepthofargument}

	\begin{document}
	\begin{center}
		\fontsize{15pt}{10pt}\selectfont
		\textbf{The Computation of the Euclidean Distance Degree for the Middle Catalacticant for the Binary Forms}
	\end{center}
	\vspace{0.1cm}
	\begin{center}
		\fontsize{13pt}{10pt}\selectfont
		{Belal Sefat Panah}
	\end{center}
	
\begin{abstract}
	The $n$-secant varieties to the Veronese embedding $v_{2n}(\mathbb{P}^{1})$ are hypersurfaces of degree $n+1$, denoted by  $\sigma_n(v_{2n}(\mathbb{P}^1))$. We compute the  Euclidean distance degree $\mathrm{EDdegree}$
	of $\sigma_n(v_{2n}(\mathbb{P}^1))$ for $n\le 5$ with respect to the Bombieri-Weyl quadratic form, which is maybe the most interesting case. 
	The output for $n=1,\ldots, 5$ is respectively $2, 7, 20, 53, 162$. Our main tool is the
	topological Aluffi-Harris formula. This is the first case when  the  $\mathrm{EDdegree}$ of a  $r$-secant variety to the Veronese embedding $v_{d}(\mathbb{P}^{m})$
	is computed for $r\ge 2$ and $d\ge 3$.
\end{abstract}
	 
	\section{Introduction}
		The Euclidean distance degree $\mathrm{EDdegree}$ of an algebraic variety $X$ is the number of critical points of the squared distance function from a general point,
as defined in \cite{DHOST}.
Consider $\sigma_r(v_n(\mathbb{P}^{1}))$, which are the \textit{$r$-secant} varieties of \textit{Veronese} varieties of $\mathbb{P}^1$.  The aim of this paper is to compute the $\mathrm{EDdegree}$ of $\sigma_n(v_{2n}(\mathbb{P}^1))$ for the Bombieri-Weyl forms for $n\le 5$ and provide computational evidence for $n=6, 7$.   For a general quadratic forms these numbers are known from Catanese-Trifogli formula, while for the Bombieri-Weyl quadratic form the computation is still open and it is maybe the most interesting. $\sigma_n(v_{2n}(\mathbb{P}^1))$ is a determinantal hypersurface of degree $n+1$ in $\P^{2n}$, sometimes called the middle catalecticant hypersurface. Aluffi and Harris obtain in \cite{AH} a topological formula for the $\mathrm{EDdegree}$ of a smooth complex variety , but the main obstacle to use this formula is that the $r$-secant varieties of Veronese varieties are not smooth for $r\ge 2$. In \cite{DHOST}, it was proven that the $\mathrm{EDdegree}$ of an algebraic variety is equal to the $\mathrm{EDdegree}$ of its dual. Fortunately the dual varieties of $\sigma_n(v_{2n}(\mathbb{P}^{1}))$ are smooth varieties and we can directly apply the \textit{Aluffi} and \textit{Harris} formula. Before we compute the $\mathrm{EDdegree}$ of $\sigma_n(v_{2n}(\mathbb{P}^{1}))$, we bring forward the \textit{Harmonic coordinate system} and use it as the main tool to compute the $\mathrm{EDdegree}$.
Our main results are proved in \S\ref{sec:computation}.
They can be resumed in the following
\begin{theorem}\label{thm:main}
Let $X=\sigma_n(\nu_{2n}(\mathbb{P}^n))$. Its $\mathrm{EDdegree}$ with respect to the Bombieri-Weyl quadratic form for $n\le 5$ is given by the following table
\begin{center}
	\begin{tabular}{ |p{1.5cm}|p{4cm}|}
				\hline
		$n$&$\mathrm{EDdegree}\sigma_n(\nu_{2n}(\mathbb{P}^n))$\\
		\hline
		$1$    &  2\\
		$2$    &  7\\
		$3$    &  20\\
		$4$   &  53\\
		$5$   &  162\\
				\hline
	\end{tabular}
\end{center}
\end{theorem}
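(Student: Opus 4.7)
The plan is to leverage the projective duality for EDdegree proved in \cite{DHOST}, namely that $\mathrm{EDdegree}(X)=\mathrm{EDdegree}(X^{\vee})$, in order to replace the singular hypersurface $X=\sigma_n(v_{2n}(\mathbb{P}^{1}))$ with its dual variety $X^{\vee}\subset (\mathbb{P}^{2n})^{\vee}$. By the classical apolarity description of the catalecticant, $X^{\vee}$ is a smooth projective variety; in particular, the Aluffi--Harris topological formula of \cite{AH} is available for $X^{\vee}$.

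First I would identify $X^{\vee}$ intrinsically. Since $X$ is the locus of binary forms of degree $2n$ whose middle catalecticant is singular, its dual is governed by apolarity and admits a description in terms of the rational normal curve $v_{2n}(\mathbb{P}^{1})$ and its secant flag; in Bombieri--Weyl coordinates this gives a concrete smooth model on which the relevant Chern classes can be handled. Next, I would instantiate the Aluffi--Harris formula, writing $\mathrm{EDdegree}(X^{\vee})$ as a signed combination (with binomial-type coefficients in $-2$) of Euler characteristics of successive generic linear sections of $X^{\vee}$ and of the sections cut by the isotropic quadric $Q_{\infty}$ of the Bombieri--Weyl form. The harmonic coordinate system introduced earlier in the paper is the essential computational device at this step: it diagonalises the Bombieri--Weyl form in a way that is compatible with the $\mathrm{SL}_{2}$-action on binary forms, so that $Q_{\infty}\cap X^{\vee}$ decomposes into manageable pieces whose Euler characteristics can be read off.

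With these ingredients in place, the proof would proceed by carrying out the computation separately for each $n\in\{1,2,3,4,5\}$. For small $n$ one may also double-check the output by an independent numerical critical-point count, giving the table $2,7,20,53,162$ of the statement.

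The main obstacle is the last step: the Bombieri--Weyl quadric $Q_{\infty}$ is highly non-generic with respect to $X^{\vee}$, so the intersection $X^{\vee}\cap Q_{\infty}$ is singular and its Euler characteristic is \emph{not} obtained by a straightforward Chern-class plug-in. Handling this requires analysing the singular locus in harmonic coordinates and computing its contribution by an equivariant stratification; this is where the delicate case analysis for each $n\le 5$ is concentrated, and the reason why the computation becomes progressively heavier as $n$ grows, up to the practical limit reached by the paper.
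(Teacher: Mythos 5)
Your proposal follows essentially the same route as the paper: dualize via $\mathrm{EDdegree}(X)=\mathrm{EDdegree}(X^{\vee})$, identify the smooth dual concretely (in the paper, the variety of squares $Sq_n=\{f^2\}\simeq\mathbb{P}^n$), apply the Aluffi--Harris topological formula in the harmonic coordinate system, and reduce everything to Euler characteristics of the non-generic quartic section $Y\cap Q$ and of $Y\cap H\cap Q$, whose singular locus must be analysed case by case. The one caveat is that what you have written is a strategy outline rather than a proof: the actual values $2,7,20,53,162$ come from the explicit per-$n$ computations (the $\mathbb{C}^*$-action giving $\chi(Y\cap Q)$, the osculating-space description of $\mathrm{Sing}\,\mathcal{Q}$, and the genus/complete-intersection and Milnor-number corrections to $\chi(Y\cap H\cap Q)$), all of which your proposal defers.
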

The $\mathrm{EDdegree}$ of a  $r$-secant variety to the Veronese embedding $v_{d}(\mathbb{P}^{m})$ was known before only
for $r=1$ (Cartwright-Sturmfels formula, see \cite[Corollary 8.7]{DHOST}) and $d=2$ (the case of symmetric matrices, classically known by Eckart-Young Theorem). 
		\subsection{Harmonic Coordinate System}\label{sec:harmonic}	
		
	\begin{definition} Fix a nondegenerate quadratic form $q\in\sym^d\C^2$, in orthonormal coordinates we may assume $q(x,y)=x^2+y^2$. The space of homogeneous polynomials of degree $d$ in two variables $\mathrm{Sym}^{d}\mathbb{C}^2$ is not irreducible under the action of the orthogonal group $SO(2)$, but it splits in several pieces which are irreducible and each piece is given by \textit{harmonic polynomials}. The space $H_d$ of harmonic polynomials of degree $d$ is a $2$-dimensional vector space spanned by $(x-iy)^d, (x+iy)^d$. 
	In $\mathrm{Sym}^2\mathbb{C}^n$, the harmonic subspace $H_2$ corresponds to the space of traceless symmetric matrices.
The splitting in irreducible $SO(2)$-modules is 
\begin{equation}\label{eq:harmonicdec}\sym^d\C^2=\bigoplus_{i=0}^{\lfloor d/2\rfloor}q^{i}H_{d-2i}.\end{equation}
\end{definition} 
\begin{definition}[Bombieri-Weyl Form]
		Let $W$ be a space of dimension $n + 1$ , being equipped
		with a nondegenerate quadratic form $q_W$. Assume $V = \mathrm{Sym}^dW$. We choose coordinates in $W$ such that $q_W= \sum_{i=0}^{n}x_i^{2}$. There is a unique nondegenerate bilinear form $Q$ such that
		\begin{equation}\label{BW}
			Q(x^d , y^d ) = q_W(x, y)^d \ \ \ \  \forall x, y \in W,
		\end{equation}
		which is called the \textit{Bombieri-Weyl} or \textit{Frobenius} form. Since the vector space of symmetric tensors $\mathrm{Sym}^dW$ admits a basis of rank-one elements $\{x^d\mid x\in W\}$, the formula \eqref{BW} prescribes $Q(f,g)$ for every $f,g \in \mathrm{Sym}^dW$, by linearity. For the symmetric tensors $f$ and $g$ $\in \mathrm{Sym}^dW$, the Bombieri-Weyl form has the coordinate expression 
		$$Q\big(\sum_{|\alpha|=d}{d \choose \alpha} f_{\alpha} x^{\alpha},\sum_{|\alpha|=d}{d \choose \alpha} g_{\alpha}x^{\alpha}\big) = \sum_{|\alpha|=d}{d \choose \alpha}f_{\alpha}g_{\alpha}$$
		and up to a scalar factor, has the \textbf{M2} \cite{M2} implementation
		 \begin{lstlisting} 
		 	(1/d!)*diff(f,g)
	 	 \end{lstlisting} 
 	 			\end{definition}
  		
  								\begin{proposition}\label{prop:harmdecortho}
			\begin{enumerate}
			\item{i)}
			The summands in (\ref{eq:harmonicdec}) are orthogonal for the Bombieri-Weyl form.
			\item{ii)} there is a scalar $c$, depending only on $i$, $d$, such that
			$$Q(q^if,q^ig)=cQ(f, g)\qquad\forall f, g\in\sym^{d-2i}\C^2$$
			\item{iii)} in particular the forms $(x^2+y^2)^{\alpha}(x+iy)^{\beta}$ and $(x^2+y^2)^{\alpha}(x-iy)^{\beta}$ are isotropic if $\beta > 0$.
			\end{enumerate}
	\end{proposition}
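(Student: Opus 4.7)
My plan is to reduce all three statements to a single representation-theoretic fact: the summands in \eqref{eq:harmonicdec} are pairwise non-isomorphic irreducible $O(2)$-modules, and the Bombieri--Weyl form $Q$ is $O(2)$-invariant. Once this is set up, Schur's lemma delivers everything.

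First I would verify that $Q$ is $O(2)$-invariant. For any $g\in O(2)$ and $v,w\in W$, the defining identity~\eqref{BW} together with the $O(2)$-invariance of $q_W$ gives
\[
Q(g\cdot v^d,\,g\cdot w^d)=q_W(gv,gw)^d=q_W(v,w)^d=Q(v^d,w^d);
\]
since pure powers span $\sym^d\C^2$ and $Q$ is bilinear, $Q$ is $O(2)$-invariant on all of $\sym^d\C^2$. The multiplication map $f\mapsto q^if$ is $O(2)$-equivariant as well, since $q$ is $O(2)$-fixed by construction. I would then identify the $O(2)$-structure of the harmonic summands: the basis vectors $(x\pm iy)^{d-2i}$ of $H_{d-2i}$ are $SO(2)$-eigenvectors of weights $\pm(d-2i)$, swapped by the reflection $(x,y)\mapsto(x,-y)\in O(2)\setminus SO(2)$, so for $d-2i\geq1$ the module $H_{d-2i}$ is the standard $2$-dimensional irreducible $O(2)$-representation of label $d-2i$ (and the trivial representation when $d-2i=0$). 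Multiplication by $q^i$ gives an $O(2)$-isomorphism $H_{d-2i}\xrightarrow{\sim}q^iH_{d-2i}$, and as $i$ varies the labels $d-2i$ are all distinct, so the summands in~\eqref{eq:harmonicdec} are pairwise non-isomorphic irreducible $O(2)$-submodules of $\sym^d\C^2$.

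With these preparations, part~(i) is immediate by Schur: an $O(2)$-invariant bilinear form annihilates pairs of non-isomorphic irreducibles. For~(ii), both $(f,g)\mapsto Q(f,g)$ and $(f,g)\mapsto Q(q^if,q^ig)$ are $O(2)$-invariant bilinear forms on the irreducible, self-dual module $H_{d-2i}$; the space of such forms is one-dimensional, so they agree up to a scalar $c=c(i,d)$ (and extend across the other harmonic pieces of $\sym^{d-2i}\C^2$ by the same Schur argument on each isotypic component). Part~(iii) then follows by direct isotropy: $u:=x+iy$ satisfies $q_W(u,u)=1+i^2=0$, so~\eqref{BW} forces $Q(u^\beta,u^\beta)=q_W(u,u)^\beta=0$ whenever $\beta>0$; applying~(ii) with $f=g=u^\beta\in H_\beta$ and $d=2\alpha+\beta$, $i=\alpha$ then yields $Q(q^\alpha u^\beta,q^\alpha u^\beta)=c\cdot0=0$, and the case $x-iy$ is identical.

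The only real technical point is the correct identification of $H_{d-2i}$ as an $O(2)$-irreducible --- namely that the reflection glues the two $SO(2)$-weight lines into a single irreducible piece, which is what makes Schur's lemma apply across all of $H_{d-2i}$ rather than on each $SO(2)$-weight line separately. Everything else is routine.
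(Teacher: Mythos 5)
The paper states Proposition \ref{prop:harmdecortho} without any proof, so there is nothing of the author's to compare against; your argument fills that gap. The representation-theoretic route ($O(2)$-invariance of $Q$ from the defining identity \eqref{BW}, plus Schur's lemma applied to the pairwise non-isomorphic irreducible summands $q^iH_{d-2i}$) is a correct and clean way to get (i), and your derivation of (iii) from the isotropy $q_W(x+iy,x+iy)=0$ together with (ii) applied on the harmonic piece $H_\beta$ is also sound. (For (i) alone, even $SO(2)$-invariance suffices: each torus weight $\pm(d-2i)$ occurs in $\sym^d\C^2$ with multiplicity one and both weight lines of a given absolute value lie in the same summand $q^iH_{d-2i}$; passing to $O(2)$ as you do is equally valid and tidier.)

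The genuine gap is your parenthetical extension of (ii) from $H_{d-2i}$ to all of $\sym^{d-2i}\C^2$. Schur's lemma only gives a proportionality constant on each isotypic component $q^jH_{d-2i-2j}$ \emph{separately}, and these constants do not agree. Concretely, for $d=4$, $i=1$: on the component $qH_0\subset\sym^2\C^2$ one has $Q(q\cdot q,\,q\cdot q)/Q(q,q)=(8/3)/2=4/3$, while on $H_2$ one has $Q\bigl(q(x+iy)^2,\,q(x-iy)^2\bigr)/Q\bigl((x+iy)^2,(x-iy)^2\bigr)=4/4=1$. So (ii) as literally stated, with a single $c=c(i,d)$ valid for all $f,g\in\sym^{d-2i}\C^2$, is false, and no argument can rescue it; the statement your Schur argument actually proves — and the only case used in (iii) and in Lemma \ref{2n=k+j} — is $Q(q^if,q^ig)=cQ(f,g)$ for $f,g\in H_{d-2i}$. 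You should restrict (ii) to the harmonic subspace rather than assert the extension, noting that the overreach originates in the paper's own phrasing of the proposition.
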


		\begin{lemma}\label{2n=k+j}	
			For $0 \le j,k \le 2n$, and $n\ge1$, the bilinear product of $\langle(x+iy)^{2n-j}(x-iy)^{j},(x+iy)^{2n-k}(x-iy)^{k}\rangle$ is nonzero if and only if $2n=k+j$. 
		\end{lemma}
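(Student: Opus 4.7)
The plan is to place each monomial $(x+iy)^{2n-j}(x-iy)^{j}$ into a canonical summand of the harmonic decomposition \eqref{eq:harmonicdec} and then invoke Proposition \ref{prop:harmdecortho}. Using the identity $(x+iy)(x-iy)=x^{2}+y^{2}=q$ to collect conjugate pairs, I will write
\[
(x+iy)^{2n-j}(x-iy)^{j}=\begin{cases} q^{j}(x+iy)^{2n-2j}, & 0\le j\le n,\\ q^{2n-j}(x-iy)^{2j-2n}, & n\le j\le 2n, \end{cases}
\]
so that the monomial lies in the summand $q^{a(j)}H_{2n-2a(j)}$ with $a(j):=\min(j,2n-j)$, and similarly for the $k$ monomial.

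Part (i) of Proposition \ref{prop:harmdecortho} immediately forces $a(j)=a(k)$ for the pairing $Q$ to be nonzero, after which I will split into three cases. If both $j,k\le n$ or both $j,k\ge n$, the equality $a(j)=a(k)$ gives $j=k$, and by part (ii) the pairing becomes a nonzero multiple of $Q\bigl((x\pm iy)^{2n-2j},(x\pm iy)^{2n-2j}\bigr)$ with the same sign on both arguments. Part (iii) of the proposition makes this vanish whenever $2n-2j>0$, so the only survivor in these cases is $j=k=n$, which still satisfies $j+k=2n$.

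The remaining mixed case is $j\le n\le k$ (or its symmetric counterpart). Here $a(j)=j$ and $a(k)=2n-k$, so the orthogonality condition $a(j)=a(k)$ is precisely $j+k=2n$. The pairing then reduces by part (ii) to a nonzero multiple of $Q\bigl((x+iy)^{m},(x-iy)^{m}\bigr)$ with $m=2n-2j\ge 0$, and evaluating via the defining property \eqref{BW} of the Bombieri-Weyl form gives $q_{W}\bigl((1,i),(1,-i)\bigr)^{m}=2^{m}\ne 0$. Combining the three cases yields both directions of the lemma. The argument is essentially bookkeeping once the monomials have been placed correctly in the harmonic decomposition, so I do not foresee a serious obstacle beyond the final appeal to \eqref{BW} that produces an explicit nonzero value in the one surviving case.
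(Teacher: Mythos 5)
Your proposal is correct and follows essentially the same route as the paper: rewrite each monomial as $q^{a}\cdot(\text{harmonic part})$, use Proposition \ref{prop:harmdecortho}(i) to force equality of the $q$-exponents, and use (iii) to kill the self-pairings of nonconstant harmonic factors. If anything, your treatment of the ``if'' direction is slightly more careful than the paper's, since you pin down the surviving pairing as a nonzero multiple of $Q\bigl((x+iy)^{m},(x-iy)^{m}\bigr)=2^{m}$ via \eqref{BW}, where the paper only appeals to part (i) of the proposition.
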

		\begin{proof}
			($\Rightarrow$) Suppose the product is nonzero. We split the case into the following conditions:
				\begin{enumerate}[label=(\roman*)]
					\item First,assume $j,k \le n$ therefore $2n-j\ge j, 2n-k \ge k$. The product can be rewritten as $\langle(x+iy)^{2n-2j}(x^2+y^2)^{j},(x+iy)^{2n-2k}(x^2+y^2)^{k}\rangle$, since it is nonzero and by Prop. \ref{prop:harmdecortho} i), $j=k$. Now by Prop. \ref{prop:harmdecortho} iii) the product $\langle(x+iy)^{2n-2j}(x^2+y^2)^{k},(x+iy)^{2n-2j}(x^2+y^2)^{k}\rangle$ is nonzero if $2n-2j=0$, therefore  $n=j=k$ and $2n=j+k$.
					\item Second, assume $j,k \ge n$ therefore $2n-j\le j, 2n-k \le k$. The product can be rewritten as $\langle(x^2+y^2)^{2n-j}(x-iy)^{2j-2n},(x^2+y^2)^{2n-k}(x-iy)^{2k-2n}\rangle$, since it is nonzero and by Prop. \ref{prop:harmdecortho} i), $2n-j=2n-k$, therefore $j=k$.  Now by Prop. \ref{prop:harmdecortho} iii) the product $\langle(x^2+y^2)^{2n-j}(x-iy)^{2j-2n},(x^2+y^2)^{2n-j}(x-iy)^{2j-2n}\rangle$ is nonzero if $2j-2n=0$, therefore $n=j=k$ and $2n=j+k$.
					\item Third, assume $j\le n \le k$ therefore $2n-j\ge j, 2n-k \le k$. The product can be rewritten as $\langle(x+iy)^{2n-2j}(x^2+y^2)^{j},(x^2+y^2)^{2n-k}(x-iy)^{2k-2n}\rangle$, since it is nonzero and by Prop. \ref{prop:harmdecortho} i), $j=2n-k$, therefore $2n=j+k$. By Symmetry same argument holds for $k\le n \le j$.
				\end{enumerate}
			 			($\Leftarrow$) Now suppose $2n=k+j$,  and $j \ge k$ the product can be rewritten as $\langle(x+iy)^{2n-j-k=0}(x-iy)^{j-k}(x^2+y^2)^{k},(x+iy)^{2n-2k}(x^2+y^2)^{k}\rangle$, if $j \ge k$, and by Prop. \ref{prop:harmdecortho} i), it is nonzero. Same argument holds for $j\le k$.
		\end{proof}

	We consider the last secant variety of $v_{2n}(\P^1)$ that does not fill the ambient space,
	namely $\sigma_{n}(v_{2n}(\P^1))$ which is a determinantal hypersurface of degree $n+1$ in $\P^{2n}=\P(\sym^{2n}\C^2)$ .
	\begin{lemma}\label{lem:dualmiddle} The dual variety of $\sigma_{n}(v_{2n}(\P^1))$ is the variety of squares
		$$Sq_n=\{f^2|f\in\sym^{n}\C^2\}$$
		Its projectivization in $\P^{2n}=\P(\sym^{2n}\C^2)$ is a smooth variety isomorphic to $\P^n$.
	\end{lemma}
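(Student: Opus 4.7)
The plan is to describe the tangent space to $X=\sigma_n(v_{2n}(\P^1))$ at a generic smooth point via Terracini's lemma, and then translate orthogonality with respect to the Bombieri--Weyl form $Q$ into an apolar condition on the dual form. Concretely, I would take a generic smooth point $f=\sum_{i=1}^n\ell_i^{2n}$ of $X$, with pairwise non-proportional linear forms $\ell_1,\ldots,\ell_n\in\C[x,y]$; by Terracini its affine tangent space is
\[
T_f X=\sum_{i=1}^n\langle \ell_i^{2n-1}m : m\in\sym^1\C^2\rangle.
\]
A form $g\in\sym^{2n}\C^2$ defines a tangent hyperplane at $f$ (via the Bombieri--Weyl identification) iff $Q(g,h)=0$ for every $h\in T_fX$. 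I would start from the apolarity identity $Q(g,\ell^{2n})=g(\ell)$ and differentiate along $m$ to obtain
\[
Q(g,\ell^{2n-1}m)=\tfrac{1}{2n}\,\nabla g(\ell)\cdot m,
\]
so the tangency condition amounts to $\nabla g(\ell_i)=0$ for $i=1,\ldots,n$.

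Then Euler's identity $2n\cdot g=x\,\partial_x g+y\,\partial_y g$ would upgrade $\nabla g(\ell_i)=0$ to $g(\ell_i)=0$. In the UFD $\C[x,y]$ this forces $\ell_i^2\mid g$; since the $\ell_i$ are pairwise non-proportional and $\deg g=2n=\sum_i\deg\ell_i^2$, I would conclude $g=c\,(\prod_i\ell_i)^2$ for some scalar $c$, hence $g\in Sq_n$. The reverse inclusion is immediate, so passing to closures would yield $X^\vee=Sq_n$.

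For the last statement I would analyze the squaring morphism $\phi\colon\P^n=\P(\sym^n\C^2)\to\P^{2n}$, $[h]\mapsto[h^2]$. Injectivity follows because $h_1^2=\lambda h_2^2$ in $\C[x,y]$ forces $h_1=\pm\sqrt{\lambda}\,h_2$; the differential at $[h]$ sends $[k]\in\sym^n\C^2/\langle h\rangle$ to $[2hk]\in\sym^{2n}\C^2/\langle h^2\rangle$, which is injective because $2hk\in\langle h^2\rangle$ implies $h\mid k$. An injective immersion out of a projective variety is a closed embedding, so $\phi$ identifies $\P^n$ with the smooth subvariety $Sq_n\subset\P^{2n}$. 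The main technical step is the derivative formula $Q(g,\ell^{2n-1}m)=\tfrac{1}{2n}\nabla g(\ell)\cdot m$; everything else follows cleanly from Terracini, Euler, and unique factorization.
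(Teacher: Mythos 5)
Your proof is correct, but it takes a genuinely different route from the paper's. The paper settles the duality statement in one line by citing an external result (Corollary~3.1 of the reference on catalecticant hypersurfaces, applied with partition $\lambda=2^n$, which identifies $\sigma_n(v_{2n}(\P^1))$ as the middle catalecticant determinant and names its dual), and it deduces smoothness of $Sq_n$ from homogeneity under a group action. You instead give a direct, self-contained argument: Terracini's lemma describes $T_fX$ at a generic $f=\sum_i\ell_i^{2n}$, the Bombieri--Weyl reproducing identity $Q(g,\ell^{2n})=g(\ell)$ and its derivative translate tangency into $\nabla g(\ell_i)=0$, Euler's formula upgrades this to a double root of $g$ at each $[\ell_i]$, and unique factorization with the degree count $2n=\sum_i 2$ forces $g$ to be a perfect square; smoothness then follows from checking that the squaring map $\P(\sym^n\C^2)\to\P(\sym^{2n}\C^2)$ is an injective immersion, hence a closed embedding. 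What your approach buys is independence from the citation and an explicit record of how the Bombieri--Weyl form identifies the dual projective space with $\P(\sym^{2n}\C^2)$ --- an identification the paper uses implicitly throughout (e.g.\ when intersecting $Sq_n$ with the isotropic quadric). Two cosmetic points: in ``$\ell_i^2\mid g$'' you are tacitly replacing the vector $\ell_i=(a_i,b_i)$ by the linear form $b_ix-a_iy$ vanishing at it, which is harmless since the set of squares is intrinsic; and the ``reverse inclusion'' deserves the one-line remark that a square with $n$ distinct double roots is tangent at the smooth point $\sum_i\ell_i^{2n}$ built from those roots, after which one takes closures using irreducibility of the conormal variety.
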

	\begin{proof}
		Apply \cite[Corollary 3.1]{https://doi.org/10.48550/arxiv.1508.00202}, with partition $\lambda=2^n$, $\sigma_n$  is a hypersurface given by the determinant of middle Catalecticant
		map.
		Smoothness follows since $GL(\sym^{n}\C^2)$ acts transitively on $Sq_n$.
	\end{proof}
With respect to a general quadratic form we have
\begin{proposition}[Catanese-Trifogli] With respect to a general quadratic form, the EDdegree of  $\sigma_{n}(v_{2n}(\P^1))$  is $$\frac{3^{n+1}-1}{2}.$$
\end{proposition}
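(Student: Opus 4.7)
The idea is to pass to the smooth dual via the duality theorem of \cite{DHOST}, apply the Aluffi--Harris topological formula \cite{AH}, and finish with a short Chern-class computation on $\P^n$.

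First, by \cite{DHOST} the EDdegree is invariant under projective duality, so it suffices to compute $\mathrm{EDdegree}(Sq_n)$. By Lemma \ref{lem:dualmiddle}, $Sq_n$ is a smooth subvariety of $\P^{2n}$ isomorphic to $\P^n$ via $[f]\mapsto[f^2]$; since a linear form on $\sym^{2n}\C^2$ pulls back under this map to a quadratic form on $\sym^n\C^2$, the embedding corresponds to $\mathcal{O}_{\P^n}(2)$.

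Next, because $Sq_n$ is smooth, the formula of \cite{AH} provides
$$\mathrm{EDdegree}(Sq_n)=(-1)^n\,\chi\bigl(Sq_n\setminus(H_\infty\cup Q)\bigr)$$
for a general hyperplane $H_\infty$ and a general isotropic quadric $Q$. Under the identification with $\P^n$, the divisor $H_\infty$ pulls back to a smooth quadric hypersurface $X_2\subset\P^n$, and $Q$ pulls back to a smooth quartic $X_4\subset\P^n$; by Bertini their intersection $X_{2,4}$ is also smooth. Inclusion--exclusion then gives $\chi(X^\circ)=\chi(\P^n)-\chi(X_2)-\chi(X_4)+\chi(X_{2,4})$.

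The last step computes each Euler characteristic from the Chern-class identity $c(T_X)=(1+h)^{n+1}/\prod_i(1+d_ih)$ on $\P^n$, which for a smooth hypersurface of degree $d$ in $\P^n$ yields $\chi(X_d)=(n+1)+\bigl((1-d)^{n+1}-1\bigr)/d$ and, via partial fractions of $1/((1+2h)(1+4h))$, an analogous closed form for $\chi(X_{2,4})$. The hard part is purely combinatorial: one must verify that, upon substituting $d\in\{2,4\}$ and collecting, all contributions proportional to $(n+1)$ and to the rational constants $1/2,1/4$ cancel pairwise, leaving only $\chi(X^\circ)=\bigl((-1)^{n+1}-(-3)^{n+1}\bigr)/2$. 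Multiplying by $(-1)^n$ then produces the claimed value $\tfrac{3^{n+1}-1}{2}$.
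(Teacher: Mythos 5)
Your proof is correct, but it takes a genuinely different (and more self-contained) route than the paper. The paper disposes of this proposition in one line: it simply cites the Catanese--Trifogli formula, in the form of \cite[Corollary 6.1, Prop. 7.10]{DHOST}, applied to $Sq_n\simeq\P^n$ embedded by (a subsystem of) $|\mathcal{O}_{\P^n}(2)|$; that formula expresses the generic EDdegree as a sum of polar degrees computed from Chern classes. You instead re-derive the value from the Aluffi--Harris Euler-characteristic formula: dualize, pull back $H$ and $Q$ to a smooth quadric $X_2$ and a smooth quartic $X_4$ in $\P^n$, and run inclusion--exclusion with the standard $\chi$ formulas for smooth hypersurfaces and complete intersections. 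Your final cancellation does check out: with $N=n+1$ one finds $\chi(X_{2,4})=N+(-1)^N-\tfrac34-\tfrac{(-3)^N}{4}$ (this reproduces the values $8,-16,64,-176$ used elsewhere in the paper), hence $\chi(X^\circ)=\bigl((-1)^{N}-(-3)^{N}\bigr)/2$ and the claimed $\tfrac{3^{n+1}-1}{2}$ after multiplying by $(-1)^n$. One point you should make explicit: the embedding of $Sq_n$ uses only the $(2n+1)$-dimensional subsystem of $|\mathcal{O}_{\P^n}(2)|$ coming from $(\sym^{2n}\C^2)^*$, which is proper for $n\ge 2$; smoothness of $X_2$, $X_4$ and $X_{2,4}$ for a \emph{general} quadratic form still follows from Bertini because this subsystem is base-point free ($\langle f^2,\ell\rangle=0$ for all $\ell$ forces $f=0$). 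A side benefit of your route is that it is exactly the ``smooth model'' of the computation the paper performs for the Bombieri--Weyl form via Corollary \ref{cor:finalED}, where the quartic degenerates and acquires singularities; the paper's citation-based proof hides this parallel.
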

\begin{proof}
This follows from Catanese-Trifogli formula applied to $Sq_n$, see for example \cite[Corollary 6.1, Prop. 7.10]{DHOST}.
\end{proof}
The goal of this paper is to compute  $\mathrm{EDdegree}\sigma_n(v_{2n}(\P^1))$ with respect to the Bombieri-Weyl quadratic form. By \cite[Theorem 5.2]{DHOST} we get that $\mathrm{EDdegree}(X)=\mathrm{EDdegree}(X^\vee) $ 
	for any projective variety. The main advantage to use this formula in combination with Lemma \ref{lem:dualmiddle} is that
	the dual variety $\sigma_{n}(v_{2n}(\P^1))^\vee=Sq_{n}$ is a smooth variety.
	The isotropic quadric cuts $Sq_n$ in a divisor of $Sq_n$. Since $Sq_n$ is isomorphic to $\P^n=\P(\sym^n\C^2)$,
	we get in this way a quartic hypersurface ${\mathcal Q}$ in $\P^n$. We will see at beginning of the proof of Theorem \ref{N-osculating singularities} that this quartic is singular at least at  the two points corresponding to
	 $(x+iy)^n$ and $(x-iy)^n$. The following Theorem shows that the singular locus is indeed larger.

		\begin{theorem}\label{N-osculating singularities}
			The quartic
			$$\mathcal{Q}=\{f \in \mathrm{Sym}^{n}\mathbb{C}^2\vert \ \norm {f^2}^2  = 0, \ \text{where the norm is the Bombieri-Weyl norm.} \},$$
			\begin{enumerate}[label=(\roman*)]
				\item for $n \ge 2N +1$, $N \in \mathbb{N}^*$, contains the $N$-osculating space of rational normal curve at $(x+iy)^n$ and $(x-iy)^n$.
				\item for $n \ge 3N +1$, $N \in \mathbb{N}^*$, is singular at the $N$-osculating space of rational normal curve at $(x+iy)^n$ and $(x-iy)^n$.
			\end{enumerate} 
		\end{theorem}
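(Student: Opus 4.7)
The plan is to work in the basis $u=x+iy$, $v=x-iy$ and reduce everything to bookkeeping of $v$-exponents, to be controlled by Lemma \ref{2n=k+j}. Writing the rational normal curve as the image of $[\alpha:\beta]\mapsto(\alpha u+\beta v)^n$, the expansion $(u+\beta v)^n=\sum_k\binom{n}{k}u^{n-k}v^k\beta^k$ shows that the $N$-osculating space at $u^n=(x+iy)^n$ is $T_N=\mathrm{span}\{u^{n-k}v^k:0\le k\le N\}$. By the complex conjugation symmetry $u\leftrightarrow v$ the analogous description holds at $v^n=(x-iy)^n$, so it suffices to treat the point $u^n$.

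For part (i), take a general $f=\sum_{k=0}^{N}c_k u^{n-k}v^k\in T_N$. Then $f^2$ is a linear combination of the monomials $u^{2n-m}v^m$ with $0\le m\le 2N$. By Lemma \ref{2n=k+j}, two such monomials pair nontrivially under $Q$ only if their $v$-exponents add to $2n$. Under the hypothesis $n\ge 2N+1$ one has $m+m'\le 4N<2n$ for any two $v$-exponents occurring in $f^2$, so every term of $Q(f^2,f^2)$ vanishes; hence $f\in\mathcal{Q}$.

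For part (ii), note that
\[
Q\bigl((f+\epsilon g)^2,(f+\epsilon g)^2\bigr)=Q(f^2,f^2)+4\epsilon\,Q(f^2,fg)+O(\epsilon^2),
\]
so the defining quartic of $\mathcal{Q}$ is singular at $f$ exactly when $Q(f^2,fg)=0$ for every $g\in\mathrm{Sym}^n\mathbb{C}^2$. By linearity it suffices to test $g=u^{n-\ell}v^\ell$ for $0\le\ell\le n$; then $fg$ is a combination of $u^{2n-s}v^s$ with $s=k+\ell$, $k\le N$. Again by Lemma \ref{2n=k+j}, a nontrivial pairing with $f^2$ requires $m+s=2n$ for some $m\le 2N$ and $k\le N$, forcing $\ell=2n-m-k\ge 2n-3N$. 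The hypothesis $n\ge 3N+1$ yields $\ell\ge n+2$, which is incompatible with $\ell\le n$. So no term can contribute, $Q(f^2,fg)=0$ identically in $g$, and $f$ is a singular point of $\mathcal{Q}$. Specializing to $N=0$ recovers the claim that $\mathcal{Q}$ is already singular at $(x\pm iy)^n$, as anticipated before the statement.

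The only genuine difficulty lies in the correct identification of the $N$-osculating space with the $v$-filtration $T_N$ and in matching the first-order condition for singularity with a pairing of the form $Q(f^2,fg)$; once these are in place, both statements collapse to the inequalities $2n>4N$ (for (i)) and $2n>3N+n$ (for (ii)), which are exactly the hypotheses.
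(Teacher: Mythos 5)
Your proof is correct and takes essentially the same route as the paper: harmonic coordinates $u^{n-k}v^k$, Lemma \ref{2n=k+j} reducing everything to the isobaric condition that exponents sum to $2n$, and the inequalities $4N<2n$ for (i) and $3N+n<2n$ for (ii); your reformulation of singularity via the first-order term $Q(f^2,fg)$ is just the coordinate-free version of the paper's ``all partials $\partial/\partial z_{i_4}$ vanish.'' One harmless slip: for $n\ge 3N+1$ you get $\ell\ge 2n-3N\ge n+1$, not $n+2$, but $\ell>n$ is all that is needed.
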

		\begin{proof}
			Having Lemma \ref{2n=k+j} in mind, we choose homogeneous coordinates $(z_0,\ldots, z_n)$ in $\P^n$  defined by the following formula
			\begin{equation*}
				f=\sum_{j=0}^{n}z_j(x+iy)^{n-j}(x-iy)^j,
			\end{equation*}
			so that the two points  $(x+iy)^n$ and $(x-iy)^n$ have coordinates respectively $(1,0,\ldots, 0)$ and $(0,\ldots, 0, 1)$.
			The square $f^2$ is given by
			\begin{equation*}
				f^2=\sum_{j,k=0}^{n}z_jz_k(x+iy)^{2n-j-k}(x-iy)^{j+k},
			\end{equation*}
			and by Lemma \ref{2n=k+j} the equation of $\mathcal{Q}$ contains the monomial $z_jz_kz_pz_q$ only if $j+k+p+q=2n$, namely if it is {\it isobaric} of weight $2n$.
			In particular the two extreme monomials $z_0^4$ and $z_n^4$ do not appear in $\mathcal{Q}$ , which shows immediately that 
			the two points  $(x+iy)^n$ and $(x-iy)^n$ belong to $\mathcal{Q}$.
			The $N$-osculating space at $(x+iy)^{n}$ is spanned by $\langle(x+iy)^{n},(x+iy)^{n-1}(x-iy),(x+iy)^{n-2}(x-iy)^2, \dots ,(x+iy)^{n-N}(x-iy)^N\rangle$. For proving the statement we should verify that 
			$$V(z_0,\dots,z_N,0,\dots,0)\subseteq V(\langle f^2,f^2\rangle),$$
			which means all partial derivatives vanish at the given point $(z_0,\dots,z_N,0,\dots,0)$, which corresponds to a general point on $N$-osculating space, vanish. We verify that the terms   $z_{i_1}z_{i_2}z_{i_2}z_{i_4}$ for $i_1, i_2,i_3,i_4 \le N$ will not appear in the expression of the equation of the quartic.\\
			By the Lemma \ref{2n=k+j} this term appears when $2n-i_1-i_2=i_3+i_4$, which means $2n=\sum_{j=1}^{4} i_j \le 4N$, since each $i_j$ is bounded by $N$, and this results $n\le2N$. Therefore for $n\ge2N+1$ the quartic always contains the $N$-osculating space, and this proves (i).\\
			
			For proving (ii), we need to verify that no terms in the form $z_{i_1}^{m_1}z_{i_2}^{m_2}z_{i_2}^{m_3}z_{i_4}^{m_4}$ where $i_1, i_2,i_3 \le N$ and $i_4 \le n$, appear in the equation of the quartic. By the \textbf{Lemma.} \ref{2n=k+j} this term appears when $2n-i_1-i_2=i_3+i_4$, which means $2n=\sum_{j=1}^{3} i_j + n \le 3N+n $, and this results $n\le3N$. Therefore for $n\ge3N+1$ the quartic is singular at the $N$-osculating space of the rational normal curve at $(x+iy)^n$ and $(x-iy)^n$.
		\end{proof}
		\textbf{Conjecture.} For $n\neq3N+1$, there are no other singularities than the $N$-osculating spaces.

		\section{Computing the $\mathrm{EDdegree}$ of $\sigma_n(\nu_{2n}(\mathbb{P}^1))$}\label{sec:computation}
		In \cite{AH} \textit{Aluffi} and \textit{Harris} showed that 	the $\mathrm{EDdegree}$ of a smooth complex variety is given by the following theorem:
		\begin{theorem}\label{smooth}
			Let $X$ be a smooth subvariety of $\mathbb{P}^{n-1}$, let $Q$ be the quadric hypersurface corresponding to a nondegenerate quadratic form, assume $X  \nsubseteq Q$. Then
			\begin{equation}
				\mathrm{EDdegree}(X)={(-1)}^{\mathrm{dim}X}\chi \big(X\setminus(Q \cup H)\big),
			\end{equation} 
			where $H$ is a general hyperplane, $\mathrm{EDdegree}$ is computed with respect to $Q$  and $\chi$ is the ordinary topological Euler characteristic.
		\end{theorem}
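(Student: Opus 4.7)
The plan is to realize $\mathrm{EDdegree}(X)$ as the number of zeros of a generic logarithmic $1$-form on $X$, and then apply a Gauss-Bonnet-type computation for logarithmic differentials to convert that count into a topological Euler characteristic.

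First, I would recall from \cite{DHOST} that for a generic data point $u$ the critical points of the squared distance function $d_u^2$ on the smooth variety $X$ are the finitely many points $x\in X$ at which $u-x$ is $Q$-orthogonal to $T_xX$. The defining equations for these critical points can be packaged as the vanishing of the restriction $d(d_u^2)|_X$. Because $d_u^2$ is a quadratic affine function of homogeneous coordinates divided by the quadratic form $q$ (to make it well-defined on projective space), its restriction to $X$ extends to a rational section of $\Omega^1_X$ with logarithmic poles along the divisor cut out on $X$ by $H\cup Q$, i.e.\ to a global section $\omega\in H^0\bigl(X,\Omega^1_X(\log((Q\cup H)\cap X))\bigr)$.

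Second, I would check that for generic $u$ the divisor $D:=(Q\cup H)\cap X$ has simple normal crossings on $X$ and that $\omega$ has only isolated, transverse zeros; this is a Bertini-type argument which crucially uses the hypothesis $X\not\subseteq Q$ together with the genericity of $H$. Then I would apply the logarithmic Gauss-Bonnet formula: for any smooth projective $X$ and simple normal crossings $D\subset X$,
\begin{equation*}
\int_X c_{\dim X}\bigl(\Omega^1_X(\log D)\bigr)=(-1)^{\dim X}\chi(X\setminus D).
\end{equation*}
By step one the left-hand side equals $\mathrm{EDdegree}(X)$, and with $D=(Q\cup H)\cap X$ the right-hand side equals $(-1)^{\dim X}\chi(X\setminus(Q\cup H))$, which is the desired formula.

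The main obstacle is step two: verifying that the natural extension of $d(d_u^2)|_X$ really lives in $\Omega^1_X(\log D)$ and not in a larger sheaf, and that it is sufficiently generic to apply the logarithmic Gauss-Bonnet formula. This requires a careful local analysis near each point of $X\cap(Q\cup H)$ using the explicit form of $d_u^2$ in affine coordinates around such a point, and it is precisely here that the hypothesis $X\not\subseteq Q$ enters in an essential way.
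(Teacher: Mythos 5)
The paper itself gives no proof of this statement: it is quoted directly from Aluffi and Harris \cite{AH}, so there is no internal argument to compare yours against. Judged on its own merits, your outline has the right skeleton --- realizing the ED critical points for generic $u$ as the zeros on $X\setminus(Q\cup H)$ of the logarithmic differential of the master function $\langle u,x\rangle^2/q(x)$, and then converting the count into a signed Euler characteristic --- and your step one is essentially the standard reduction from \cite{DHOST}. The gap is in step two, and it cannot be repaired by a Bertini argument.

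The quadric $Q$ is not generic: it is the fixed isotropic quadric of the given quadratic form, and the entire point of the theorem (and of its use in this paper) is that $X\cap Q$ may be badly singular. Indeed, for the varieties treated here $Y\cap Q$ is a quartic singular along whole osculating spaces (Theorem \ref{N-osculating singularities}). Genericity of $u$ lets you move $H=H_u$, but not $Q$, so you cannot arrange for $D=(Q\cup H)\cap X$ to be simple normal crossings, and the identity $\int_X c_{\dim X}\bigl(\Omega^1_X(\log D)\bigr)=(-1)^{\dim X}\chi(X\setminus D)$ is valid only for SNC (or otherwise special) divisors. If $X$ happens to meet $Q$ transversally your argument does go through, but then it merely reproduces the generic-quadric count of Catanese--Trifogli/DHOST; the content of the Aluffi--Harris theorem is precisely the non-transversal case. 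Handling it requires replacing the naive logarithmic Gauss--Bonnet count by an argument with Chern--Schwartz--MacPherson classes, or passing to a log resolution and controlling the excess contributions of the section along the singular locus of $X\cap Q$; this is the substance of the proof in \cite{AH}. A second, smaller issue: the exponents in the master function $\langle u,x\rangle^{2}q(x)^{-1}$ are fixed rather than generic, so Varchenko--Huh-type theorems on critical points of master functions (which require generic exponents) also do not apply off the shelf, and your claim that the zeros of $\omega$ are isolated and transverse needs a separate justification.
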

		Then $\mathrm{EDdegree}(X)$ is given by
		\begin{equation}\label{eq: smooth complex}
			\mathrm{EDdegree}(X)={(-1)}^{\mathrm{dim}X}\big(\chi (X) - \chi(X \cap Q) - \chi (X \cap H) + \chi (X \cap Q \cap H)\big),
		\end{equation}
				Since $\sigma_n(\nu_{2n}(\mathbb{P}^1))$ are not smooth, we cannot directly apply the \textit{Aluffi} formula on $\sigma_n(\nu_{2n}(\mathbb{P}^1))$. We will compute the $\mathrm{ED}$degree of $\sigma_n(\nu_{2n}(\mathbb{P}^1))$ by its dual, since it is smooth, by Lemma \ref{lem:dualmiddle}.

		\begin{theorem}\label{thm:chiq}
						Let $Y=\sigma_n(\nu_{2n} (\mathbb{P}^1))^\vee\simeq\P^n$. Consider the quartic ${\mathcal Q}=Y\cap Q\subset \sigma_n(\nu_{2n} (\mathbb{P}^1))^\vee$, 
						defined by the Bombieri-Weyl metric
						we have $$\chi(Y\cap Q)=\left\{\begin{array}{cc}n&\textrm{\ if\ }n\textrm{\ is even}\\
						n+1&\textrm{\ if\ }n\textrm{\ is odd}\end{array}\right.$$
								\end{theorem}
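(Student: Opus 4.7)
The plan is to exploit a $\mathbb{C}^*$-action on $\mathcal{Q}$ that comes for free from the isobaric structure already established in the proof of Theorem \ref{N-osculating singularities}. In the coordinate system $f=\sum_j z_j (x+iy)^{n-j}(x-iy)^j$, every monomial $z_{j_1}z_{j_2}z_{j_3}z_{j_4}$ in the equation of $\mathcal{Q}$ satisfies $j_1+j_2+j_3+j_4=2n$. Therefore $\mathcal{Q}$ is preserved by the action
$$\lambda_t:(z_0:z_1:\cdots:z_n)\longmapsto(z_0:tz_1:t^2z_2:\cdots:t^n z_n),\qquad t\in\mathbb{C}^*,$$
since the defining polynomial is multiplied by a single scalar $t^{2n}$.

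The next step is to invoke the classical identity $\chi(\mathcal{Q})=\chi(\mathcal{Q}^{\mathbb{C}^*})$, valid for any complex algebraic variety carrying an algebraic $\mathbb{C}^*$-action: every non-fixed orbit is isomorphic to $\mathbb{C}^*$ (possibly quotiented by a finite cyclic stabilizer), which has Euler characteristic zero, so stratification by orbit type immediately yields the equality. The fixed locus of $\lambda_t$ on $\mathbb{P}^n$ is precisely the set of $n+1$ coordinate points $e_0,\ldots,e_n$, so the computation reduces to deciding which $e_j$ lie on $\mathcal{Q}$.

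Evaluating at $e_j$ gives $f=(x+iy)^{n-j}(x-iy)^j$, hence $f^2=(x+iy)^{2n-2j}(x-iy)^{2j}$. Lemma \ref{2n=k+j} applied with both indices equal to $2j$ says that $\|f^2\|^2\neq 0$ if and only if $2j+2j=2n$, i.e.\ $j=n/2$. Thus when $n$ is odd no fixed point is excluded and $\chi(\mathcal{Q})=n+1$; when $n$ is even exactly the point $e_{n/2}$ is excluded and $\chi(\mathcal{Q})=n$, matching the statement.

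The main conceptual step is recognising the $\mathbb{C}^*$-invariance of $\mathcal{Q}$; once this is in place the evaluation at coordinate points is a one-line consequence of Lemma \ref{2n=k+j}, and no delicate analysis of singularities is required even though $\mathcal{Q}$ is highly singular by Theorem \ref{N-osculating singularities}. The only small subtlety worth spelling out is the fixed-point formula for Euler characteristic in the singular setting, since $\mathcal{Q}$ is not smooth; this is handled by the orbit-stratification argument above rather than by Białynicki-Birula, which would require smoothness.
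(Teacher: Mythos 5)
Your proof is correct and follows the same overall strategy as the paper: both exploit the $\mathbb{C}^*$-action induced by the isobaric structure of the quartic (your weights $t^j$ and the paper's weights $t^{-n+2j}$ give the same projective action up to reparametrization), identify the fixed locus with the coordinate points, and use Lemma \ref{2n=k+j} to see that $e_j\in\mathcal{Q}$ exactly when $j\neq n/2$. The one place where you genuinely diverge is the justification of the localization step: the paper decomposes $\mathcal{Q}$ into attractive cells $E_j=\{x:\lim_{t\to 0}t\cdot x=e_j\}$, invoking Konarski's extension of the Bia{\l}ynicki-Birula decomposition to \emph{normal} varieties to get that each cell is locally closed and contractible, whereas you use the softer identity $\chi(\mathcal{Q})=\chi(\mathcal{Q}^{\mathbb{C}^*})$, valid for any complex algebraic variety with an algebraic $\mathbb{C}^*$-action because the non-fixed locus is stratified by orbits of Euler characteristic zero. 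Your route is arguably cleaner: it sidesteps the normality hypothesis (which the paper asserts for $Y\cap Q$ without proof) and the claim that the cells are contractible, at the cost of relying on the additivity of the compactly supported Euler characteristic and its agreement with the ordinary one for complex algebraic varieties --- a standard fact, but one worth citing explicitly if you write this up.
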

		\begin{proof}
			We consider $Y\cap Q$ as a quartic hypersurface embedded in $\P^n$ with homogeneous coordinates $(z_0,\ldots, z_n)$.
		The multiplicative group $\C^*$ acts on $Y\cap Q$ by $t\cdot z_i=t^{-n+2i}z_i$ for $i=0,\ldots, n$.
		The fixed points of this action are $e_i=(0,\ldots, 1, \ldots, 0)$ for $i=0,\ldots, n$, $i\neq n/2$ when $n$ is even.
		Each fixed point $e_j$ as above defines an attractive cell $E_j=\{ x\in Q|\lim_{t\to 0} t\cdot x=e_j\}$.
		By \cite{Kon78} each $E_j$ is locally closed (this is called the Bialinicki-Birula decomposition in the smooth setting), since $Y\cap Q$ is a normal variety.
		Each $E_j$ is easily seen to be contractible.
		Hence $$ \chi(Y\cap Q)=\sum_{\footnotesize \begin{array}{c} j=0\ldots n\\j\neq n/2\end{array}}\chi(E_j)=\sum_{\footnotesize \begin{array}{c} j=0\ldots n\\j\neq n/2\end{array}} 1$$
		and the thesis follows. 
		\end{proof}	
\begin{theorem}\label{thm:chiXH}
Let $Y=\sigma_n(\nu_{2n} (\mathbb{P}^1))^\vee\simeq\P^n$, let $H$ be a general hyperplane. Then $Y\cap H$ is isomorphic to a smooth quadric of dimension $n-1$ and it satisfies
 $$\chi(Y\cap H)=\left\{\begin{array}{cc}n&\textrm{\ if\ }n\textrm{\ is even}\\
						n+1&\textrm{\ if\ }n\textrm{\ is odd}\end{array}\right.$$
								\end{theorem}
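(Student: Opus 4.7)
The plan is to use the description from Lemma~\ref{lem:dualmiddle} of $Y=\sigma_n(\nu_{2n}(\P^1))^\vee$ as the image of the squaring embedding $\nu\colon \P^n=\P(\sym^n\C^2)\to \P^{2n}=\P(\sym^{2n}\C^2)$, $[f]\mapsto[f^2]$. In the coordinates $(z_0,\dots,z_n)$ introduced in the proof of Theorem~\ref{N-osculating singularities}, one already has $f^2=\sum_{j,k=0}^{n} z_j z_k (x+iy)^{2n-j-k}(x-iy)^{j+k}$, so every component of $\nu$ is a homogeneous form of degree~$2$ in the $z_i$. Hence $\nu$ realizes $\P^n\simeq Y$ through a (not necessarily complete) linear system $\Lambda\subset|\mathcal{O}_{\P^n}(2)|$, and this system is base-point-free because $f^2=0$ forces $f=0$.

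Given any hyperplane $H\subset\P^{2n}$, the isomorphism $\nu$ identifies $Y\cap H$ with $\nu^{-1}(H)\subset\P^n$, which is the vanishing locus of a single quadratic form $q_H\in\Lambda$. Thus $Y\cap H$ is a quadric hypersurface of dimension $n-1$. Applying Bertini's theorem to the base-point-free linear system $\Lambda$ on the smooth variety $\P^n$, the generic $q_H$ cuts out a smooth quadric, which proves the first half of the statement.

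The remaining step is the classical Euler-characteristic computation for a smooth projective quadric $\mathcal{Q}^d\subset\P^{d+1}$, namely $\chi(\mathcal{Q}^d)=d+1$ if $d$ is odd and $\chi(\mathcal{Q}^d)=d+2$ if $d$ is even. This can either be quoted from the well-known Betti numbers (the cohomology is $\mathbb{Z}$ in every even degree, with a jump to $\mathbb{Z}^2$ in the middle when $d$ is even) or, in the spirit of the proof of Theorem~\ref{thm:chiq}, derived by specializing $q_H$ to a diagonal form and running the Bialinicki-Birula decomposition induced by a generic $\C^*$-action on $\P^n$. Substituting $d=n-1$ yields $\chi(Y\cap H)=n$ when $n$ is even and $\chi(Y\cap H)=n+1$ when $n$ is odd, matching the statement. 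No serious obstacle arises here; the only thing to check carefully is base-point-freeness of $\Lambda$, after which everything reduces to classical invariants of smooth quadrics.
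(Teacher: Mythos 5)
Your proposal is correct and follows essentially the same route as the paper, whose entire proof is the one-line observation that $H$ pulls back under the squaring map to a quadric hypersurface in $\P^n$; you simply fill in the details (base-point-freeness, Bertini for smoothness, and the classical Betti numbers of a smooth quadric) that the paper leaves implicit. Nothing further is needed.
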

\begin{proof} The hyperplane $H$ cuts $Y\simeq\P^n$ in a quadratic hypersurface in $\P^n$.
\end{proof}
Note by Theorem \ref{thm:chiq} and Theorem \ref{thm:chiXH} it follows $$\chi(Y\cap Q)=\chi(Y\cap H).$$

\begin{corollary}\label{cor:finalED}
With respect to the Bombieri-Weyl metric, the EDdegree of $Y=\sigma_n(\nu_{2n} (\mathbb{P}^1))^\vee\simeq\P^n$
is
$$1-n+\chi(Y\cap H\cap Q) \textrm{\ if\ }n\textrm{\ is even}$$
$$1+n- \chi(Y\cap H\cap Q) \textrm{\ if\ }n\textrm{\ is odd}$$
\end{corollary}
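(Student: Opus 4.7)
The plan is to combine three ingredients that are already in place: the duality identity $\mathrm{EDdegree}(X)=\mathrm{EDdegree}(X^\vee)$ from \cite[Theorem 5.2]{DHOST}, the smoothness of $Y=\sigma_n(\nu_{2n}(\mathbb{P}^1))^\vee\simeq\mathbb{P}^n$ established in Lemma \ref{lem:dualmiddle}, and the Aluffi--Harris formula \eqref{eq: smooth complex} applied to $Y$. Because $Y$ is smooth, the Aluffi--Harris formula applies directly, which is precisely the motivation for passing to the dual.

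First I would substitute $\dim Y=n$, so that the overall sign is $(-1)^n$, together with $\chi(Y)=\chi(\mathbb{P}^n)=n+1$. Then I would insert the values of $\chi(Y\cap Q)$ and $\chi(Y\cap H)$ supplied by Theorems \ref{thm:chiq} and \ref{thm:chiXH}. The key observation, explicitly flagged by the paper right before the corollary, is that these two Euler characteristics are equal for each parity of $n$: both equal $n$ when $n$ is even and $n+1$ when $n$ is odd. This coincidence is what forces the two parity cases of the statement.

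For $n$ even the prefactor is $+1$ and the subtraction $\chi(Y)-\chi(Y\cap Q)-\chi(Y\cap H)$ equals $(n+1)-n-n=1-n$, leaving $1-n+\chi(Y\cap H\cap Q)$. For $n$ odd the prefactor is $-1$ and the same three terms collapse to $(n+1)-(n+1)-(n+1)=-(n+1)$; multiplying by $-1$ and subtracting the remaining intersection Euler characteristic yields $1+n-\chi(Y\cap H\cap Q)$. Both expressions match the claimed formula.

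Since every ingredient has already been assembled, there is essentially no obstacle inside the proof of the corollary itself; it is a bookkeeping consequence of the three preceding results. The genuine difficulty, which the next section of the paper must address, is hidden in the one remaining term $\chi(Y\cap H\cap Q)$: this is the Euler characteristic of a hyperplane section of the quartic $\mathcal{Q}$ whose singular locus was bounded in Theorem \ref{N-osculating singularities}, and whose structure is the core computational content needed to extract the numerical values in Theorem \ref{thm:main}.
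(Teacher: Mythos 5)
Your proposal is correct and follows exactly the paper's own (one-line) proof: substitute $\chi(Y)=n+1$, the parity-dependent values of $\chi(Y\cap Q)$ and $\chi(Y\cap H)$ from Theorems \ref{thm:chiq} and \ref{thm:chiXH}, and the sign $(-1)^{\dim Y}=(-1)^n$ into the Aluffi--Harris expression \eqref{eq: smooth complex}. The arithmetic in both parity cases checks out, so there is nothing to add.
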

\begin{proof} Apply (\ref{eq: smooth complex}), Theorem \ref{thm:chiq} , Theorem \ref{thm:chiXH} and the fact that $\chi(Y)=\chi(\P^n)=n+1$.
\end{proof}

		\begin{theorem}\label{EDSec2v4p1}
						Consider the \textit{Veronese embedding} of $\nu_4(\mathbb{P}^1)$. The $\mathrm{ED}$degree of $\sigma_2(\nu_4(\mathbb{P}^1))$ for the Bombieri-Weyl  quadratic form is given by $7$.
		\end{theorem}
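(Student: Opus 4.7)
\emph{Proof plan.} The strategy is to apply Corollary \ref{cor:finalED} with $n=2$. Since $n=2$ is even, the formula reads
\begin{equation*}
\mathrm{EDdegree}(\sigma_2(\nu_4(\mathbb{P}^1))) = 1 - 2 + \chi(Y \cap H \cap Q) = -1 + \chi(Y \cap H \cap Q),
\end{equation*}
where $Y = \sigma_2(\nu_4(\mathbb{P}^1))^\vee \simeq \mathbb{P}^2$. Proving the theorem therefore reduces to showing $\chi(Y \cap H \cap Q) = 8$.

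First I would describe the two hypersurfaces cut on $Y \simeq \mathbb{P}^2$. By Theorem \ref{thm:chiXH}, for a general hyperplane $H$ the section $Y\cap H$ is a smooth plane conic. The other factor, $\mathcal{Q}=Y\cap Q$, is the plane quartic whose equation in the harmonic coordinates $(z_0,z_1,z_2)$ on $\mathbb{P}^2$ is, by Lemma \ref{2n=k+j}, a linear combination of the isobaric weight-$4$ monomials $z_0^2 z_2^2$, $z_0 z_1^2 z_2$, and $z_1^4$. A direct computation of the three Bombieri-Weyl pairings shows that all three coefficients are nonzero, so $\mathcal{Q}$ is genuinely of degree $4$; a quick partial-derivative check then shows that the singular locus of $\mathcal{Q}$ consists exactly of the two isotropic points $(1,0,0)$ and $(0,0,1)$, namely the classes of $(x+iy)^2$ and $(x-iy)^2$.

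Next I would compute $\chi(Y\cap H\cap Q)$ via B\'ezout: a smooth plane conic meets a plane quartic in $2\cdot 4=8$ points counted with multiplicity. For $H$ general the conic $Y\cap H$ avoids the two singular points of $\mathcal{Q}$ and meets $\mathcal{Q}$ transversally, so the intersection is $8$ distinct reduced points and $\chi(Y\cap H\cap Q)=8$. Substituting into the displayed formula gives $\mathrm{EDdegree}=-1+8=7$, matching the entry for $n=2$ in Theorem \ref{thm:main}.

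The only subtle point is the transversality and avoidance of singularities, but this is handled by a standard Bertini-type argument: in the dual $\mathbb{P}^4$ of hyperplanes of $\mathbb{P}^{2n}=\mathbb{P}^4$, the hyperplanes through a fixed point of $Y$ form a proper linear subspace, and those cutting $\mathcal{Q}$ non-transversally form a proper closed subvariety, so a generic $H$ avoids both. No other delicate step arises: the only computation one actually has to do is the routine verification that the three Bombieri-Weyl coefficients entering the equation of $\mathcal{Q}$ are nonzero.
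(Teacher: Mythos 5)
Your proposal is correct and follows essentially the same route as the paper: reduce to the dual $Y\simeq\mathbb{P}^2$, write the quartic $\mathcal{Q}$ in harmonic coordinates as a combination of the isobaric monomials $z_0^2z_2^2$, $z_0z_1^2z_2$, $z_1^4$, and obtain $\chi(Y\cap H\cap Q)=8$ from the intersection of the conic $Y\cap H$ with the quartic away from its singular points, then apply Corollary \ref{cor:finalED}. The only cosmetic difference is that the paper carries out the Bombieri--Weyl computation explicitly and factors $\mathcal{Q}=64\bigl((z_1^2+8z_0z_2)+\sqrt{48}z_0z_2\bigr)\bigl((z_1^2+8z_0z_2)-\sqrt{48}z_0z_2\bigr)$ as two conics tangent at $(1,0,0)$ and $(0,0,1)$, which makes the reducedness of $\mathcal{Q}$ (needed for the count of eight distinct points) immediate, whereas you infer it from the partial-derivative check of the singular locus.
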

		\begin{proof}
			 For the Bombieri-Weyl  quadratic form in \cite{OSS}, the $\mathrm{ED}$degree of $\sigma_2(\nu_4(\mathbb{P}^1))$ has been computed numerically.
			 We will prove the statement, giving a theoretical argument. Let $Y=\sigma_2(\nu_4(\mathbb{P}^1))^\vee$.
	We compute now that the intersection $Y\cap Q$ is the union of two conics tangent at two distinct points (and its Euler characteristic  is $2$).
			 Indeed, for $f \in \mathrm{Sym}^4\mathbb{C}^2$, consider the harmonic coordinate system in \S \ref{sec:harmonic}
		\begin{equation}\label{f}
			f=\sum_{j=0}^{2}z_j(x+iy)^{2-j}(x-iy)^j=z_0(x+iy)^2+z_1(x^2+y^2)+z_2(x-iy)^2,
		\end{equation}
		the intersection of $Y$ and $Q$ is a quartic which is given by 
		$$\mathcal{Q}=\{f\vert \ \norm {f^2}^2  = 0, \ \text{where the norm is Bombieri-Weyl norm.} \}$$
		By the formula of $f$ in (\refeq{f}), $f^2$ is given by
		\begin{equation}\label{f^2}
			\begin{split}
				f^2&=z_0^{2}(x+iy)^4+z_2^{2}(x-iy)^4\\
				&+ (x^2+y^2)[2z_0z_1(x+iy)^2+2z_1z_2(x-iy)^2]\\
				&+ (x^2+y^2)^2[z_1^{2}+2z_0z_2].
			\end{split}
				\end{equation}
					Each lines of the equation (\ref{f^2}) are orthogonal, and by simplifying and applying the Prop. \ref{prop:harmdecortho}  on $\norm {f^2}^2$ the quartic up to a scalar is given by
			\begin{equation}\label{Qn=2}
			\begin{split}
				\mathcal{Q}&=\norm {f^2}^2= (2^9)z_{0}^{2}z_{2}^{2}+ (2^9)z_0z_{1}^2z_2 + (2^6)z_1^{4}+(2^9)z_0z_1^{2}z_2+(2^9)z_0^{2}z_2^{2}\\
				&=1024z_0^{2}z_2^{2}+64z_1^{4}+1024z_0z_{1}^2z_2=64(16z_0^2z_2^2+z_1^4+16z_0z_1^2z_2) \\
				&=64\big((z_1^{2}+8z_0z_2)+\sqrt{48}z_0z_2\big)\big((z_1^{2}+8z_0z_2)-\sqrt{48}z_0z_2\big).
			\end{split}
		\end{equation}
				the polynomial in $z_1^2, z_0z_2$ is consequences of SO(2)-invariant which is singular at two points $(1,0,0),(0,0,1)$. The two conics in the last line of \eqref{Qn=2} are tangent at these two points.
			Since $H$ does not cut the singular points of the quartic, the intersection $(Y \cap H \cap Q)$ is given by eight distinct points.
			By applying Corollary \ref{cor:finalED} we get
					\begin{equation*}
			\mathrm{EDdegree}(\sigma_2(\nu_4(\mathbb{P}^1))=\mathrm{EDdegree}(\sigma_2(\nu_4(\mathbb{P}^1))^\vee=1-2+8=7.
		\end{equation*}
	\end{proof}
\begin{theorem}\label{EDSec3v6p1}
	Consider the \textit{Veronese embedding} of $\nu_6(\mathbb{P}^1)$, The $\mathrm{ED}$degree of $\sigma_3(\nu_6(\mathbb{P}^1))$ for the Bombieri-Weyl quadratic form is given by $20$.
\end{theorem}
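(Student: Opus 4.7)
The plan follows the template of Theorem \ref{EDSec2v4p1}: apply Corollary \ref{cor:finalED} with $n=3$ (odd), so that with $Y = \sigma_3(\nu_6(\mathbb{P}^1))^\vee \simeq \mathbb{P}^3$ one has $\mathrm{EDdegree} = 1 + 3 - \chi(Y \cap H \cap Q)$, and the whole problem reduces to showing $\chi(Y \cap H \cap Q) = -16$. Inside $Y \simeq \mathbb{P}^3$, the hyperplane section $Y \cap H$ is a smooth quadric surface $\cong \mathbb{P}^1 \times \mathbb{P}^1$, while $Y \cap Q$ is the quartic surface $\mathcal{Q}$ studied in Theorem \ref{N-osculating singularities}. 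Consequently $Y \cap H \cap Q$ is a divisor of bidegree $(4,4)$ on $\mathbb{P}^1 \times \mathbb{P}^1$; once this curve is known to be smooth, the standard genus formula gives $g = (4-1)(4-1) = 9$, whence $\chi = 2-2g = -16$ and the final count $1 + 3 - (-16) = 20$ follows.

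The whole argument therefore reduces to verifying smoothness of this degree-$8$ curve for generic $H$. The first step is to write $\mathcal{Q}$ explicitly in harmonic coordinates $(z_0, z_1, z_2, z_3)$ via the expansion $f = \sum_{j=0}^3 z_j (x+iy)^{3-j}(x-iy)^j$; by Lemma \ref{2n=k+j} the only monomials $z_{i_1}z_{i_2}z_{i_3}z_{i_4}$ that can appear are those with $i_1+i_2+i_3+i_4 = 6$, which leaves a short list of five monomials up to $SO(2)$-symmetry, and whose coefficients can be read off via the Bombieri-Weyl pairing exactly as in \eqref{Qn=2}. With the equation of $\mathcal{Q}$ in hand, the next step is a direct Jacobian computation verifying that the singular locus of $\mathcal{Q}$ consists only of the two extreme points $(1,0,0,0)$ and $(0,0,0,1)$, i.e.\ $(x \pm iy)^3$; these are forced to be singular by Theorem \ref{N-osculating singularities}, and since $3 \neq 3N+1$ for any $N \ge 1$, the conjecture following that theorem predicts no others.

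Once the singular locus has been certified to be zero-dimensional, Bertini's theorem closes the argument: the linear system of quadrics on $\mathbb{P}^3$ cut out by hyperplanes of $\mathbb{P}^6$ is the pullback along $f \mapsto f^2$ and is base-point-free, since $f^2 \neq 0$ whenever $f \neq 0$. Hence a generic $Y \cap H$ both avoids the two isolated singularities of $\mathcal{Q}$ and cuts $\mathcal{Q}$ transversally elsewhere, yielding a smooth $(4,4)$ curve on $\mathbb{P}^1 \times \mathbb{P}^1$ and producing the desired value $\chi(Y \cap H \cap Q) = -16$. The main obstacle is the honest verification that $\mathcal{Q}_{\mathrm{sing}}$ reduces to just those two points; although the isobaric restriction from Lemma \ref{2n=k+j} keeps the computation short, it must be carried out directly, since Theorem \ref{N-osculating singularities} is silent about non-osculating singularities in the range $n \le 3$.
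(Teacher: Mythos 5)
Your proposal is correct and follows essentially the same route as the paper: harmonic coordinates, explicit computation of the quartic $\mathcal{Q}$, verification (which the paper delegates to Macaulay2) that its singular locus is exactly the two points $(x\pm iy)^3$, and then the genus formula for a smooth $(4,4)$ curve on the quadric $Y\cap H$ giving $\chi=-16$ and $\mathrm{EDdegree}=1+3-(-16)=20$. The only remaining work is the explicit Jacobian check you correctly flag as necessary, which the paper carries out computationally on the equation $5z_1^2z_2^2+4z_0z_2^3+4z_1^3z_3+34z_0z_1z_2z_3+33z_0^2z_3^2$.
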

\begin{proof}

	Let $Y=\sigma_3(\nu_6(\mathbb{P}^1))^\vee$. Consider the harmonic coordinate system and apply the same argument for Theorem \ref{EDSec3v6p1} for $f \in \mathrm{Sym}^{6}\mathbb{C}^2$ 
	$$	f=\sum_{j=0}^{3}z_j(x+iy)^{3-j}(x-iy)^j=z_0(x+iy)^3+z_3(x-iy)^3+(x^2+y^2)[z_1(x+iy)+z_2(x-iy)],$$
	and $f^2$ is given by 
		\begin{equation}\label{f^2n=3}
		\begin{split}
			&f^2=z_0^{2}(x+iy)^6+z_3^{2}(x-iy)^6\\
			&+ (x^2+y^2)[2z_0z_1(x+iy)^4+2z_2z_3(x-iy)^4]\\
			&+ (x^2+y^2)^2[z_1^{2}(x+iy)^2+z_2^{2}(x-iy)^2+2z_0z_2(x+iy)^2+2z_1z_3(x-iy)^2]\\
			&+(x^2+y^2)^3[2z_1z_2+2z_0z_3].\\
		\end{split}
	\end{equation}
Each lines of the equation (\ref{f^2n=3}) are orthogonal, and by computing and applying the Prop. \ref{prop:harmdecortho}  on $\norm {f^2}^2$ by \textbf{M2}, the quartic up to a scalar is given by as follow
	\begin{equation}\label{Qn=3}
		\mathcal{Q}=5z_1^2z_2^2+4z_0z_2^3+4z_1^3z_3+34z_0z_1z_2z_3+33z_0^2z_3^2.
	\end{equation}
	Recalling Theorem \ref{N-osculating singularities}, the quartic is singular at  $(1,0,0,0),(0,0,0,1)$. Moreover these points are the only singularity of the quartic, since the following system of partial equation of quartic has only solution in these two points (solved by \textbf{M2} \cite{M2})
	$$\begin{cases}
		33z_0z_3^2+17z_1z_2z_3+2z_2^3&=0\\
		5z_1z_2^2+6z_1^2z_3+17z_0z_2z_3&=0\\
		5z_1^2z_2+6z_0z_2^2+17z_0z_1z_32&=0\\
		2z_1^3+17z_0z_1z_2+33z_0^2z_3&=0.
	\end{cases}$$
		Since $H$ does not cut the singular points of the quartic, the intersection $(Y \cap H \cap Q)$
		  is a curve of type $(a,b)=(4,4)$ of degree  $(4+4)=8$  on a non-singular quadric surface in $\mathbb{P}^3$ of genus $ab -a -b +1=9$ (\cite{hartshorne1977algebraic} exercise 5.6(c)). Therefore 
	$$\chi(Y\cap H \cap Q)=2-2g=2-18=-16.$$
	Subsequently by Corollary \ref{cor:finalED}
	
	\begin{equation*}
		\mathrm{EDdegree}(\sigma_3(\nu_6(\mathbb{P}^1))=\mathrm{EDdegree}(\sigma_3(\nu_6(\mathbb{P}^1))^\vee=1+4-(-16)=20.
	\end{equation*}
\end{proof}

	\begin{theorem}\label{EDSec4v8p1}
		Consider the \textit{Veronese embedding} of $\nu_8(\mathbb{P}^1)$, The $\mathrm{ED}$degree of $\sigma_4(\nu_8(\mathbb{P}^1))$ for the Bombieri-Weyl quadratic form is  $53$.
	\end{theorem}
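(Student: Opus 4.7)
The plan is to follow the strategy used in Theorems \ref{EDSec2v4p1} and \ref{EDSec3v6p1}. Let $Y = \sigma_4(\nu_8(\P^1))^\vee$, which by Lemma \ref{lem:dualmiddle} is isomorphic to $\P^4$. Corollary \ref{cor:finalED} applied with $n = 4$ gives $\mathrm{EDdegree}(\sigma_4(\nu_8(\P^1))) = 1 - 4 + \chi(Y \cap H \cap Q) = \chi(Y \cap H \cap Q) - 3$, so the problem reduces to proving that $\chi(Y \cap H \cap Q) = 56$.

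First I would write $f = \sum_{j=0}^4 z_j(x+iy)^{4-j}(x-iy)^j$ in the harmonic coordinates and compute the equation of the quartic $\mathcal{Q} = Y \cap Q$ from $\|f^2\|^2$ via Lemma \ref{2n=k+j}, obtaining an explicit isobaric quartic polynomial in $(z_0, \ldots, z_4)$ supported on the eight monomials $z_j z_k z_p z_q$ with $j + k + p + q = 8$. The essential new feature compared to $n \le 3$ — and one to expect, since $n = 4 = 3 \cdot 1 + 1$ is the value excluded from the conjecture following Theorem \ref{N-osculating singularities} — is that $\mathrm{Sing}(\mathcal{Q})$ is strictly larger than the two $1$-osculating lines $L_1 = \{z_2 = z_3 = z_4 = 0\}$ and $L_2 = \{z_0 = z_1 = z_2 = 0\}$. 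A direct computation (for instance, restricting the Jacobian ideal to the $\mathbb{Z}/2$-fixed diagonal $z_0 = z_4,\ z_1 = z_3$ and then acting by $SO(2)$) locates an additional smooth conic $C = \{z_1 = z_3 = 0,\ z_2^2 + 12\, z_0 z_4 = 0\}$ inside the singular locus. I would then certify with \textbf{M2} that $\mathrm{Sing}(\mathcal{Q}) = L_1 \cup L_2 \cup C$ and that at a generic point of each component the Hessian of $\mathcal{Q}$ transverse to the singular locus is nondegenerate, so $\mathcal{Q}$ has ordinary transverse $A_1$ singularities along its one-dimensional singular locus.

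A generic hyperplane $H \subset \P^8$ restricts to a smooth quadric threefold $\mathcal{H} \subset Y \simeq \P^4$; by B\'ezout, $\mathcal{H}$ meets each line $L_i$ in two points and meets $C$ in $2 \cdot 2 = 4$ points, for a total of $8$ isolated transverse intersections with the singular locus. At each such point the surface $Y \cap H \cap Q$ inherits an ordinary node, because $T_p \mathcal{H}$ is transverse to the tangent line of $\mathrm{Sing}(\mathcal{Q})$ at $p$ and therefore the restriction of the transverse Hessian to $T_p \mathcal{H}$ stays nondegenerate. A smooth complete intersection of type $(2,4)$ in $\P^4$ has Euler characteristic $\int_X c_2(T_X) = 8 \cdot \deg(X) = 64$, as one reads off from the expansion $c(T_X) = (1+h)^5/\bigl((1+2h)(1+4h)\bigr) = 1 - h + 8h^2 + \cdots$. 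Since each ordinary node contributes $-1$ to $\chi$ relative to a smoothing, $\chi(Y \cap H \cap Q) = 64 - 8 = 56$ and hence $\mathrm{EDdegree}(\sigma_4(\nu_8(\P^1))) = 1 - 4 + 56 = 53$.

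The hard part will be the exhaustive description of $\mathrm{Sing}(\mathcal{Q})$: the conic $C$ is not predicted by Theorem \ref{N-osculating singularities}, and the final answer is extremely sensitive to the geometry of the singular locus — any further unanticipated irreducible component, or a singularity worse than transverse $A_1$ along $L_1$, $L_2$, or $C$, would alter the node count and move the answer away from $53$. Ruling these out rigorously reduces to a Gr\"obner basis computation in \textbf{M2}.
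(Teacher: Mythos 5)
Your proposal follows essentially the same route as the paper: reduce to the smooth dual $Y\simeq\P^4$ via Corollary \ref{cor:finalED}, compute the quartic in harmonic coordinates, identify its singular locus as the two tangent lines plus the conic $\{z_1=z_3=z_2^2+12z_0z_4=0\}$, count the $8$ points where the general quadric section meets this locus, and obtain $\chi(Y\cap H\cap Q)=64-8=56$. Your argument is in fact slightly more careful than the paper's at the step where each of the $8$ points is certified to be an ordinary node contributing $-1$ to $\chi$, which the paper asserts implicitly.
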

	\begin{proof}
	Let $Y=\sigma_4(\nu_8(\mathbb{P}^1))^\vee$. Consider the following harmonic coordinate system for $f \in \mathrm{Sym}^8(\mathbb{P}^1),$ 
		\begin{equation*}
		\begin{split}
			f&=\sum_{j=0}^{4}z_j(x+iy)^{4-j}(x-iy)^j\\
			&=z_0(x+iy)^4+z_4(x-iy)^4+(z_1(x+iy)^2+z_3(x-iy)^2)(x^2+y^2)\\
			&+z_2(x^2+y^2)^2,
		\end{split}
	\end{equation*}
	therefore $f^2$ is given by
	\begin{equation*}\label{f^2}
		\begin{split}
			f^2&=z_0^{2}(x+iy)^8+z_4^{2}(x-iy)^8\\
			&+ (x^2+y^2)[2z_0z_1(x+iy)^6+2z_4z_3(x-iy)^6]\\
			&+(x^2+y^2)^2[z_1^{2}(x+iy)^4+z_3^{2}(x-iy)^4+2z_0z_3(x+iy)^2+2z_0z_2(x+iy)^4\\
			&\ \ \ \ \ \ \ \ \ \ \ \ \ \ \ \ +2z_1z_4(x-iy)^2+2z_2z_4(x-iy)^4]\\
			&+(x^2+y^2)^3[2z_1z_2(x+iy)^2+2z_2z_3(x-iy)^2]\\
			&+(x^2+y^2)^4[z_2^2+2z_1z_3+2z_0z_4]
		\end{split}
	\end{equation*}
	Each lines of the equation is orthogonal, and the equation of the quartic is given by
	$$\mathcal{Q}=\{f\vert \ \norm {f^2}^2  = 0, \ \text{where the norm is Bombieri-Weyl norm.} \}.$$
	computing the equation by \textbf{M2} up to a scalar leads to
	\begin{equation}\label{Qd8}
		\begin{split}
			&\mathcal{Q}=\norm f^2=\\
			&z_2^4+14z_1z_2^2z_3+9z_1^2z_3^2+20z_0z_2z_3^2+20z_1^2z_2z_4+24z_0z_2^2z_4\\&+88z_0z_1z_3z_4+144z_0^2z_4^2.
		\end{split}
			\end{equation}
	The quartic is singular at tangent space at the rational normal curve at the points $(x+iy)^4$ and $(x-iy)^4$. In this case other singularities appear:
	\begin{proposition}
		The quartic \eqref{Qd8} is singular in two tangent lines at $(x+iy)^4$ and $(x-iy)^4$ and a smooth conic. Moreover the equation of the conic is given by
		\begin{equation}
			\mathcal{Q} \cap \langle (x+iy)^4,(x-iy)^4,(x^2+y^2)^2\rangle= \mathcal{Q} \cap \{z_1=z_3=0\}
			.
		\end{equation}
	\end{proposition}
	\begin{proof} Decomposing the singularity of the quartic by \textbf{M2} gives the following ideals:
		$$\{\text{\textcolor{blue}{ideal}}{(z_4,z_3,z_2)},\  \text{\textcolor{blue}{ideal}}(z_3,z_1,z_2^2+12z_0z_4),\ \text{\textcolor{blue}{ideal}}(z_2,z_1,z_0)\},$$
		which represents two tangent line at $(1,0,0,0)$ and $(0,0,0,1)$ and a conic. The equation of the conic is given by the following system of equation
	$$\begin{cases}
		z_1=0&\\
		z_3=0&\\
		z_2^2+12z_0z_4=0.
	\end{cases}$$
	\end{proof}
	Since $H$ cut the quartic in eight singular points, the intersection $(Y \cap H \cap Q)$ has $\chi$ obtained by subtracting $8$ from the $\chi$ of smooth complete intersection $(4,2)$
	in $\P^4$.
	 The complete intersection $(4,2)$ has $\chi$ equal to $64$ , see \cite{dimca2012singularities}.
	 Hence $\chi(Y \cap H \cap Q)=64-8=56$. 
	 Subsequently by Corollary \ref{cor:finalED}
	
	\begin{equation}
	\mathrm{EDdegree}(\sigma_4(\nu_8(\mathbb{P}^1))=\mathrm{EDdegree}(\sigma_4(\nu_8(\mathbb{P}^1))^\vee=1-4+56=53.
\end{equation}
\end{proof}
\begin{theorem}\label{EDSec5v10p1}
	Consider the \textit{Veronese embedding} of $\nu_{10}(\mathbb{P}^1)$, The $\mathrm{ED}$degree of $\sigma_5(\nu_{10}(\mathbb{P}^1))$ for the Bombieri-Weyl quadratic form is $162$.
\end{theorem}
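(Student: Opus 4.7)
The plan is to follow the template of Theorems~\ref{EDSec3v6p1} and \ref{EDSec4v8p1}. Writing $f=\sum_{j=0}^{5}z_j(x+iy)^{5-j}(x-iy)^j$ and expanding $f^2$ along the orthogonal decomposition $\sym^{10}\C^2=\bigoplus_{i=0}^{5}(x^2+y^2)^{i}H_{10-2i}$, Proposition~\ref{prop:harmdecortho} and Lemma~\ref{2n=k+j} yield an explicit quartic $\mathcal{Q}=Y\cap Q$ in the variables $z_0,\ldots,z_5$, most conveniently computed via \textbf{M2} \cite{M2}. By Theorem~\ref{N-osculating singularities} applied with $N=1$ and $n=5$, the quartic $\mathcal{Q}$ is singular along the two tangent lines to the rational normal curve at $(x+iy)^5$ and $(x-iy)^5$, and a primary decomposition of its Jacobian ideal in \textbf{M2} confirms that these two disjoint lines form the entire singular locus of $\mathcal{Q}$, consistent with the conjecture since $5\ne 3N+1$.

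Next I identify $Y\cap H\cap Q$ as a complete intersection of bidegree $(2,4)$ in $\P^{5}$: the hyperplane $H$ cuts $Y\simeq\P^{5}$ in a smooth quadric fourfold while $Q$ cuts $Y$ in $\mathcal{Q}$. The canonical class of a generic member of this family satisfies $K=(-6+2+4)H=0$, so a smooth $(2,4)$-threefold in $\P^5$ is Calabi--Yau with Hodge numbers $h^{1,1}=1$, $h^{2,1}=89$ and Euler characteristic $\chi_{\mathrm{smooth}}=-176$ (equivalently, one checks $c_3(TX)=-22H^3$ from $c(TX)=(1+H)^6/(1+2H)(1+4H)$ and multiplies by $\deg X=8$). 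The generic quadric $Y\cap H$ meets each of the two singular tangent lines of $\mathcal{Q}$ transversally in two points, producing exactly four singular points of $Y\cap H\cap Q$, each sitting at a generic point of a tangent line of $\mathcal{Q}$.

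The key technical step is to identify the generic transverse singularity type of $\mathcal{Q}$ along a tangent line. At a generic point $(1,t,0,0,0,0)$, the Hessian of $\mathcal{Q}$ in the transverse variables $(z_2,z_3,z_4,z_5)$ receives contributions only from those isobaric weight-$10$ monomials containing exactly two transverse letters and two factors from $\{z_0,z_1\}$; the resulting $4\times 4$ matrix is readily checked to have rank $3$ with one-dimensional kernel along $z_2$. Eliminating $z_3,z_4,z_5$ from $\partial\mathcal{Q}/\partial z_j=0$ order by order in $\C[[z_2]]$ by Tschirnhaus's method, one verifies that the residual series $\tilde g(z_2)$ vanishes to order exactly $6$, so the transverse singularity is of type $A_5$ with Milnor number $5$. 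Each of the four singular points of $Y\cap H\cap Q$ therefore inherits an $A_5$ singularity.

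For a threefold with isolated hypersurface singularities the Milnor-fiber formula gives $\chi_{\mathrm{sing}}=\chi_{\mathrm{smooth}}+\sum_p\mu_p$, so $\chi(Y\cap H\cap Q)=-176+4\cdot 5=-156$, and Corollary~\ref{cor:finalED} with $n=5$ odd yields
\[
\mathrm{EDdegree}\bigl(\sigma_5(\nu_{10}(\P^1))\bigr)=1+5-(-156)=162.
\]
The main obstacle is the $A_5$ identification in the third paragraph: establishing that $\tilde g(z_2)$ vanishes to order exactly $6$ requires carrying the Tschirnhaus elimination through several orders, and in practice both this and the non-vanishing of the leading $z_2^6$-coefficient are most cleanly confirmed by computing the local Milnor number $\dim_{\C}\mathcal{O}_p/\mathrm{Jac}$ directly in \textbf{M2} at any one of the four singular points.
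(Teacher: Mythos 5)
Your proposal follows essentially the same route as the paper: pass to the dual $Y\simeq\P^5$, identify the singular locus of $\mathcal{Q}$ as the two tangent lines to the rational normal curve at $(x\pm iy)^5$, compute $\chi=-176$ for the smooth $(2,4)$ complete intersection in $\P^5$ via the same Chern-class generating function, correct by the four transverse singular points of Milnor number $5$ to get $\chi(Y\cap H\cap Q)=-156$, and conclude via Corollary \ref{cor:finalED}. The only difference is that you sketch a hands-on identification of the transverse singularity as type $A_5$, where the paper simply asserts the Milnor number computationally; both ultimately rest on the same M2 verification.
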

\begin{proof}
Let $Y=\sigma_2(\nu_5(\mathbb{P}^{10}))^\vee$. As in the proof of Theorem \ref{EDSec3v6p1} and Theorem \ref{EDSec4v8p1},
we consider the harmonic coordinate system $(z_0,\ldots, z_5)$,
one can prove that the only singularities of $\mathcal{Q}$ are the two lines
$\{z_0=z_1=0\}$ and $\{ z_4=z_5=0\}$. 
The smooth complete intersection $(4,2)$ in $\P^5$ is a Calabi-Yau 3fold of degree $8$, whose $\chi$ is given by the coefficient of $t^3$ in the Taylor development
$$8\frac{(1+t)^6}{(1+2t)(1+4t)}$$
Such coefficient is $-176$.
Our $Y\cap H\cap Q$ has four singular points, obtained cutting the two previous lines with $H$ which is a quadric hypersurface. It can be proved (e.g. computationally) that the Milnor number of each of these points is $5$.
It follows by \cite{dimca2012singularities} that $\chi(Y\cap H\cap Q)=-176+4\cdot 5=-156$.
We get from Corollary \ref{cor:finalED}
\begin{equation}
	\mathrm{EDdegree}(\sigma_5(\nu_{10}(\mathbb{P}^1))=\mathrm{EDdegree}(\sigma_5(\nu_{10}(\mathbb{P}^1))^\vee=1+5-(-156)=162.
\end{equation}
\end{proof}
The numerical result by using \textbf{Macaulay2} for the Bombieri-Weyl form for $n=1,\dots,7$, computing the minimum distance from a random point
wth a parametrization of $Sq_n$,  are listed in the following table,
where we apply Corollary \ref{cor:finalED}:

\begin{center}
	\begin{tabular}{ |p{1.5cm}|p{2cm}|p{4cm}|}
		\hline
		$X=\mathbb{P}^n$& $\chi(\mathcal{Q}\cap H)$&$\mathrm{EDdegree}\sigma_n(\nu_{2n}(\mathbb{P})^n)$\\
		\hline
		$n=1$     &  0&  2\\
		$n=2$      &   8&  7\\
		$n=3$     & -16&  20\\
		$n=4$   &    56&  53\\
		$n=5$   &   -156&  162\\
		$n=6$   &  468&  463\\
		$n=7$   &   -1304&  1312\\
		\hline
	\end{tabular}
\end{center}
\section*{Acknowledgement}
I would like to thank my supervisor Professor Rashid Zaare Nahandi for encouraging me to do this project. A special acknowledgment goes to Professor Giorgio Ottaviani, my advisor, for his support and invaluable guidance throughout the research process.  This article is part of my doctoral thesis. 
\addtocontents{toc}{\vspace{2em}} 
\nocite{CaTri}
\nocite{zak1993tangents}
\nocite{MRW}
\nocite{DLOT}
\nocite{FO}
\nocite{CS}
\nocite{L}
\nocite{BO}
\nocite{2014}
\bibliographystyle{alpha} 
\bibliography{Paper_bibliography}
\end{document}